\newtheorem{thm}{Theorem}[section]
\newtheorem{cor}[thm]{Corollary}
\newtheorem{lem}[thm]{Lemma}
\theoremstyle{definition}
\theoremstyle{remark}
\newtheorem{rem}[thm]{\bf Remark}
\numberwithin{equation}{section}
\begin{document}
\title[Graded self-injective algebras ``are" trivial extensions]
{Graded self-injective algebras ``are" trivial extensions}
\author[  Xiao-Wu Chen
] {Xiao-Wu Chen}
\thanks{This project was supported by Alexander von Humboldt Stiftung, and was
also partially supported by China Postdoctoral Science Foundation
(No.s 20070420125 and 200801230). The author also gratefully
acknowledges the support of K. C. Wong Education Foundation, Hong
Kong}
\thanks{E-mail:
xwchen$\symbol{64}$mail.ustc.edu.cn}
\keywords{graded algebras, trivial extensions, derived categories}%
\maketitle
\date{}%
\dedicatory{}%
\commby{}%
\begin{center}
\end{center}

\begin{abstract}
For a positively graded artin algebra $A=\oplus_{n\geq 0}A_n$ we
 introduce its Beilinson algebra $\mathrm{b}(A)$. We prove that
if $A$ is well-graded self-injective, then the category of graded
$A$-modules is equivalent to the category of graded modules over the
trivial extension algebra $T(\mathrm{b}(A))$. Consequently, there is
a full exact embedding from the bounded derived category of
$\mathrm{b}(A)$ into the stable category of graded modules over $A$;
it is an equivalence if and only if the $0$-th component algebra
$A_0$ has finite global dimension.
\end{abstract}

\section{Introduction}
Let $R$ be a commutative artinian ring and let $A=\oplus_{n\geq
0}A_n$ be a positively graded artin $R$-algebra.  Set
$c=\mbox{max}\{n\geq 0\; |\; A_n\neq 0\}$. Throughout we will assume
that $A$ is nontrivially graded, that is, $c\geq 1$. We define the
\emph{Beilinson algebra $\mathrm{b}(A)$ of the graded algebra $A$}
to be the following upper triangular matrix algebra
  $$\mathrm{b}(A)=\begin{pmatrix} A_0 & A_1 & \cdots & A_{c-2} & A_{c-1}\\
                     0     &  A_0  & \cdots   & A_{c-3} & A_{c-2}\\
                       \vdots &  \vdots &   \ddots & \vdots &  \vdots \\
                       0 & 0  & \cdots  & A_0 & A_1\\
                         0 & 0 & \cdots  & 0 & A_0 \end{pmatrix}. $$
Here the multiplication of $\mathrm{b}(A)$ is induced from the one
of $A$.  This concept  originates from the following example: let
 $A$ be the exterior algebra over a field with the usual grading, then its
Beilinson algebra $\mathrm{b}(A)$ appeared in  Beilinson's study on
the bounded derived category of projective spaces (see \cite{Bei},
also see the algebras in \cite[Example 4.1.2]{Ba}, \cite[p.90]{H1}
and  \cite[Corollary 2.8]{O2}); note that the algebra
$\mathrm{b}(A)$ is different from the \emph{Beilinson algebra} in
\cite{KK} (also see \cite{Bei2} and \cite[p.332, Remark]{Ba}), while
they are derived equivalent.

\par \vskip 5pt

Denote by $A\mbox{-gr}$ the category of finitely generated  graded
left $A$-modules with morphisms preserving degrees. It is well known
that the algebra $A$ is self-injective (as an ungraded algebra) if
and only if every projective object in $A\mbox{-gr}$ is injective
(by \cite[Theorem 2.8.7]{NV} or \cite{GG}). In this case, we say
that the graded artin algebra $A$ is  \emph{graded self-injective}
(compare \cite{SZ}).

 \par \vskip 5pt

Let $T(\mathrm{b}(A))=\mathrm{b}(A)\oplus D(\mathrm{b}(A))$ be the
\emph{trivial extension algebra} of $\mathrm{b}(A)$, where $D$ is
the Matlis duality on finitely generated $R$-modules (\cite[Chapter
II \S 3]{ARS}). Note that $T(\mathrm{b}(A))$ is a graded algebra
such that ${\rm deg}\;\mathrm{b}(A)=0$ and ${\rm
deg}\;\mathrm{b}(A)=1$, and that $T(\mathrm{b}(A))$ is graded
self-injective (\cite[p.128, Proposition 3.9]{ARS} and \cite[p.62,
Lemma 2.2]{H1}).

 \par \vskip 5pt

We say that the graded artin algebra $A$ is \emph{left well-graded}
if for each nonzero idempotent $e\in A_0$, $eA_c\neq 0$. Dually one
has the notion of \emph{right well-graded algebras}. We say that the
graded algebra $A$ is \emph{well-graded} provided that it is both
left and right well-graded. For example if the $0$-th component
algebra $A_0$ is  local, then $A$ is well-graded. Note that for a
graded self-injective algebra $A$ it is left well-graded if and only
if it is right well-graded, thus well-graded, see Lemma 2.2. Clearly
the trivial extension algebra $T(\mathrm{b}(A))$ is well-graded,
since $D(\mathrm{b}(A))$ is a faithful (left and right)
$\mathrm{b}(A)$-module.

 \par \vskip 5pt

The following result is inspired by \cite[Chapter II, Example
5.1]{H1}, and it  somehow justifies the title.

\begin{thm}Let $A=\oplus_{n \geq 0}A_n$ be a well-graded self-injective
algebra. Then we have an equivalence of categories $A\mbox{-}{\rm
gr}\simeq T(\mathrm{b}(A))\mbox{-} {\rm gr}$.
\end{thm}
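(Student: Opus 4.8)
The plan is to build an explicit ``windowing'' functor $\Phi\colon A\text{-gr}\to T(\mathrm b(A))\text{-gr}$ and to check by hand that it is an equivalence. The structural fact I would record first, from graded self-injectivity together with Lemma 2.2, is that the top component controls everything: the graded injective cogenerator satisfies $D(A_A)\cong {}_AA(c)$ as graded left modules --- well-gradedness is precisely what forces the socle of each indecomposable graded projective into the top degree $c$, so that this single shift works --- and consequently there are $A_0$-bimodule isomorphisms $A_{c-m}\cong D(A_m)$ for $0\le m\le c$, induced by $D(A)\cong A(c)$ as graded $A$-bimodules (possibly up to the graded Nakayama automorphism, which will turn out to be immaterial). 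Under these, the $\mathrm b(A)$-bimodule $D(\mathrm b(A))$ is identified entrywise with a piece of the multiplication of $A$: the $(i,j)$ entry $D(e_j\mathrm b(A)e_i)=D(A_{i-j})$ becomes $A_{c-(i-j)}$ for $i\ge j$.

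Next I would define $\Phi$. For $M=\bigoplus_n M_n$ and $k\in\mathbb Z$, let $\Phi(M)_k$ be the $\mathrm b(A)$-module whose $i$-th component is $M_{kc+c-i}$ ($1\le i\le c$) and on which $\mathrm b(A)$ acts through the multiplications $A_d\otimes_{A_0}M_n\to M_{n+d}$ that stay inside the window $\{kc,\dots,kc+c-1\}$; let the degree-one part $D(\mathrm b(A))$ of $T(\mathrm b(A))$ act $\Phi(M)_k\to\Phi(M)_{k+1}$ through the remaining, boundary-crossing multiplications $A_{c-m}\colon M_{kc+r}\to M_{(k+1)c+s}$ with $m=r-s\ge 0$, read through the identifications above; put $\Phi(M)=\bigoplus_k\Phi(M)_k$ with this grading, with $\Phi$ acting on morphisms by restriction to windows. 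One must check that $\Phi(M)$ is a well-defined, finitely generated, graded $T(\mathrm b(A))$-module. Finite generation is clear, since a finitely generated graded $A$-module is bounded with finitely generated components, so only finitely many windows are nonzero. The trivial-extension relations reduce to associativity of the multiplication of $A$, sorted by how many window boundaries a given product crosses; in particular $D(\mathrm b(A))\cdot D(\mathrm b(A))=0$ in $T(\mathrm b(A))$ reflects that a product crossing two boundaries has total degree shift $\ge c+1$ and so vanishes in $A$.

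That $\Phi$ is an equivalence then splits into two routine verifications. For full faithfulness: a morphism in $A\text{-gr}$ is exactly a degree-preserving family of maps commuting with all multiplications, which is the same data as a family of maps of the $\mathrm b(A)$-modules $\Phi(M)_k$ commuting with the $\mathrm b(A)$- and $D(\mathrm b(A))$-actions, i.e. a morphism in $T(\mathrm b(A))\text{-gr}$. For density: given $N=\bigoplus_k N_k$ in $T(\mathrm b(A))\text{-gr}$, set $M_{kc+c-i}:=e_iN_k$ and define the $A$-action within each window from the $\mathrm b(A)$-action on $N_k$ and across windows from the $D(\mathrm b(A))$-action, translated back through $A_{c-m}\cong D(A_m)$; the graded $A$-module axioms follow from the graded $T(\mathrm b(A))$-module axioms of $N$ (associativity crossing at most one boundary from the trivial-extension relations, the two-boundary case zero on both sides), and $\Phi(M)\cong N$ by construction.

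I expect the only genuine obstacle to lie in the compatibility statement underlying the first two paragraphs: that the $A_0$-bimodule isomorphisms $A_{c-m}\cong D(A_m)$ coming from $D(A)\cong A(c)$ intertwine the multiplication of $A$ with the $\mathrm b(A)$-bimodule structure of $D(\mathrm b(A))$ exactly as needed, so that the boundary-crossing products of $A$ genuinely satisfy the trivial-extension relations of $T(\mathrm b(A))$ --- and that the graded Nakayama automorphism causes no discrepancy. This is where both hypotheses are essential: self-injectivity for the duality $D(A)\cong A(c)$, and well-gradedness to keep $A_c$ faithful so that no idempotent of $\mathrm b(A)$ is lost when passing to $D(\mathrm b(A))$; everything else is bookkeeping. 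A more conceptual route would instead prove an isomorphism between the repetitive algebra of $\mathrm b(A)$ and the $\mathbb Z$-indexed locally unital band algebra $\widehat A=\bigoplus_{i\ge j}A_{i-j}$, and then invoke the standard equivalences $A\text{-gr}\simeq\widehat A\text{-mod}$ and $T(\mathrm b(A))\text{-gr}\simeq$ modules over the repetitive algebra of $\mathrm b(A)$.
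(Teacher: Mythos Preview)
Your windowing functor is exactly the paper's Lemma~2.3: the paper introduces $\mathrm{t}(A)=\mathrm{b}(A)\ltimes\mathrm{x}(A)$, where $\mathrm{x}(A)$ is the lower-triangular matrix with entries $A_1,\dots,A_c$, and proves $A\text{-gr}\simeq\mathrm{t}(A)\text{-gr}$ by the same block-of-$c$-degrees construction you describe. That step is shared and correct, and in fact uses neither self-injectivity nor well-gradedness.

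Where you diverge is in passing from $\mathrm{t}(A)$ to $T(\mathrm{b}(A))$. You attempt to identify $\mathrm{x}(A)\cong D(\mathrm{b}(A))$ as $\mathrm{b}(A)$-bimodules directly, via a graded Frobenius isomorphism $D(A_A)\cong{}_AA(c)$. The obstacle you flag is real and not dismissible as ``immaterial'': the Nakayama automorphism means you only obtain a \emph{twisted} bimodule isomorphism $\mathrm{x}(A)\cong D(\mathrm{b}(A)^{\sigma})$, so the functor you wrote down lands in $T(\mathrm{b}(A)^{\sigma})\text{-gr}$, not $T(\mathrm{b}(A))\text{-gr}$. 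There is a second gap you do not flag: the one-sided isomorphism $D(A_A)\cong{}_AA(c)$ itself may fail when $A_0$ is not basic, since well-graded self-injective only forces graded Frobenius under that extra hypothesis (this is the content of the paper's Remark~2.6).

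The paper avoids both problems by arguing abstractly rather than through an explicit duality pairing. After the windowing step it Morita-reduces $\mathrm{b}(A)$ to its basic algebra $B=e\,\mathrm{b}(A)\,e$ (Lemma~2.4), then proves (Lemma~2.5) that for \emph{any} basic $B$ with $B\ltimes X$ well-graded self-injective one has $X\cong D(B^{\sigma})$ for some automorphism $\sigma$, and finally (Lemma~2.1) that $T(B)\text{-gr}\simeq T(B^{\sigma})\text{-gr}$ by regrading the module action degree by degree. Your proof would become complete if you inserted a Morita reduction to the basic case and supplied the untwisting argument; what you wave away is precisely Lemma~2.1.
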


Note that the equivalence above may not be the \emph{graded
equivalence of algebras} in the sense of \cite{GG} (compare
\cite{Bo, Z}), that is, in general it does not commute with the
degree-shift automorphisms.

\par \vskip 5pt

Denote by $A\mbox{-}\underline{\rm gr}$ the stable category with
respect to projective modules. It has a natural triangulated
structure (\cite[Chapter I, section 2]{H1}). Denote by
$\mathrm{b}(A)\mbox{-mod}$ the category of finitely generated left
$\mathrm{b}(A)$-modules, $D^b(\mathrm{b}(A)\mbox{-mod})$ its bounded
derived category.  The following  generalizes a result by Orlov
\cite[Corollary 2.8]{O2}, which might be traced back to \cite{Bei,
BGG, Bei2} (consult \cite{Ba}).

\begin{cor}
Let $A=\oplus_{n\geq 0}A_n$ be a well-graded self-injective algebra.
Then we have a full exact embedding of triangulated categories
$A\mbox{-}\underline{\rm gr}\hookrightarrow
D^b(\mathrm{b}(A)\mbox{-}{\rm mod}).$ Moreover, it is an equivalence
if and only if the $0$-th component algebra $A_0$ has finite global
dimension.
\end{cor}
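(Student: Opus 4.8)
The plan is to derive the Corollary from the Theorem together with standard facts about the stable category of a graded self-injective algebra and tilting theory for trivial extensions.

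\medskip

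First I would invoke the Theorem to replace $A\mbox{-}\underline{\rm gr}$ by $T(\mathrm{b}(A))\mbox{-}\underline{\rm gr}$; since the equivalence $A\mbox{-}{\rm gr}\simeq T(\mathrm{b}(A))\mbox{-}{\rm gr}$ is exact and sends projectives to projectives (both sides being graded self-injective, projectives coincide with injectives and are characterized intrinsically), it descends to a triangle equivalence of stable categories $A\mbox{-}\underline{\rm gr}\simeq T(\mathrm{b}(A))\mbox{-}\underline{\rm gr}$. So it suffices to construct a full exact embedding $T(\Lambda)\mbox{-}\underline{\rm gr}\hookrightarrow D^b(\Lambda\mbox{-mod})$ for $\Lambda=\mathrm{b}(A)$, and to decide when it is dense.

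\medskip

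Next I would use the well-known description of graded modules over a trivial extension: $T(\Lambda)\mbox{-}{\rm gr}$ is equivalent to the category of $\mathbb Z$-families $(M_n)_{n\in\mathbb Z}$ of $\Lambda$-modules with maps $M_n\to D(\Lambda)\otimes_\Lambda M_{n+1}$, equivalently the repetitive-type data; in fact $T(\Lambda)\mbox{-}{\rm gr}$ is equivalent to the category of finitely generated modules over the repetitive algebra $\widehat{\Lambda}$. Under this identification $T(\Lambda)\mbox{-}\underline{\rm gr}\simeq \widehat{\Lambda}\mbox{-}\underline{\rm mod}$, and Happel's theorem (\cite[Chapter II]{H1}) gives a full exact embedding $\widehat{\Lambda}\mbox{-}\underline{\rm mod}\hookleftarrow D^b(\Lambda\mbox{-mod})$ arising from the fact that $\Lambda$, viewed inside $\widehat{\Lambda}$, is a (partial) tilting-type object; composing yields the desired embedding $D^b(\Lambda\mbox{-mod})\hookrightarrow T(\Lambda)\mbox{-}\underline{\rm gr}$, whose inverse on the image is the embedding in the statement. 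For the density criterion I would appeal to Happel's characterization: the embedding $D^b(\Lambda\mbox{-mod})\hookrightarrow \widehat{\Lambda}\mbox{-}\underline{\rm mod}$ is an equivalence precisely when $\Lambda$ has finite global dimension, and ${\rm gl.dim}\,\Lambda<\infty$ if and only if ${\rm gl.dim}\,A_0<\infty$, because $\Lambda=\mathrm{b}(A)$ is built from $A_0$ by a sequence of one-point (matrix) extensions which do not change finiteness of global dimension — this last equivalence can be read off from the triangular matrix structure of $\mathrm{b}(A)$ and the standard formula for global dimension of triangular matrix algebras.

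\medskip

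The main obstacle I expect is making precise and self-contained the identification $T(\Lambda)\mbox{-}{\rm gr}\simeq \widehat{\Lambda}\mbox{-}{\rm mod}$ and checking that it is compatible with the triangulated structures and with Happel's embedding — in particular tracking that the grading shift on $T(\Lambda)\mbox{-}{\rm gr}$ corresponds to the Nakayama-type automorphism of $\widehat{\Lambda}$, and that "finitely generated graded" matches "finitely generated over $\widehat{\Lambda}$" correctly. The reduction ${\rm gl.dim}\,\mathrm{b}(A)<\infty\iff{\rm gl.dim}\,A_0<\infty$ is routine once the triangular matrix description is in hand, so the real content is the functorial dictionary in the previous step; everything else is assembling known equivalences.
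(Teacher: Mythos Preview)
Your approach is essentially the paper's: reduce via Theorem~1.1 to $T(\mathrm{b}(A))\mbox{-}\underline{\rm gr}$, then invoke Happel. The paper cites \cite[Theorem~2.5]{H2} directly (which is phrased for $T(\Lambda)\mbox{-}\underline{\rm gr}$ and already contains both directions of the ``iff''), whereas you route through the identification with $\widehat{\Lambda}\mbox{-}\underline{\rm mod}$ and cite \cite{H1}; that detour is standard and harmless, though note that the necessity of finite global dimension is proved only in \cite{H2}, not in \cite{H1}. The reduction ${\rm gl.dim}\,\mathrm{b}(A)<\infty\iff{\rm gl.dim}\,A_0<\infty$ is exactly what the paper does, citing \cite[p.78, Proposition~2.7]{ARS} for triangular matrix algebras.

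There is, however, one genuine slip. You correctly write Happel's embedding as $D^b(\Lambda\mbox{-mod})\hookrightarrow T(\Lambda)\mbox{-}\underline{\rm gr}$, but then the phrase ``whose inverse on the image is the embedding in the statement'' is not a valid step: the inverse is defined only on the essential image, so it does not yield a functor out of all of $T(\Lambda)\mbox{-}\underline{\rm gr}\simeq A\mbox{-}\underline{\rm gr}$. In fact the arrow in the Corollary as printed is reversed --- Happel's functor embeds $D^b(\mathrm{b}(A)\mbox{-mod})$ into $A\mbox{-}\underline{\rm gr}$, and when ${\rm gl.dim}\,A_0=\infty$ the former is a \emph{proper} thick subcategory of the latter, so no full embedding in the stated direction exists. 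You should flag the misprint rather than attempt to invert a non-equivalence.
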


\begin{proof}
Note that by \cite[p.78, Proposition 2.7]{ARS}, the algebra  $A_0$
has finite global dimension if and only if  so does the Beilinson
algebra $\mathrm{b}(A)$.  Theorem 1.1 implies the natural
equivalence $A\mbox{-}\underline{\rm gr}\simeq
T(\mathrm{b}(A))\mbox{-} \underline{\rm gr}$ of triangulated
categories (by \cite[Chapter I, 2.8]{H1}). Thus the corollary
follows immediately from a result by Happel (\cite[Theorem
2.5]{H2}).
\end{proof}

\section{The Proof of Theorem 1.1}
Let $R$ be a commutative artinian ring, and let $D={\rm Hom}_R(-,
E)$ be the Matlis duality with $E$ the minimal injective
$R$-cogenerator (\cite[p.37-39]{ARS}).  Let $B$ be an artin
$R$-algebra and let $_BX_B$ be a $B$-bimodule such that $R$ acts on
$X$ centrally  and  $X$ is finitely generated both as a  left and
right $B$-module. The \emph{trivial extension $B\ltimes X$ of $B$ by
the bimodule $X$ } is defined as follows: as an $R$-module $B\ltimes
X=B\oplus X$, and the multiplication is given by $(b,m)(b',m')=(bb',
bm'+mb')$ (\cite[p.78]{ARS}). Then $B\ltimes X$ is a positively
graded $R$-algebra such that ${\rm deg} \; B=0$ and ${\rm deg}\;
X=1$. We will denote by $B\ltimes X\mbox{-gr}$ the category of
finitely generated graded left $B\ltimes X$-modules.

\vskip 5pt

 Consider \emph{the regular $B$-bimodule} $_BB_B$ and its dual
$B$-bimodule $D(B)=D(_BB_B)$, and thus the $B$-bimodule structure on
$D(B)$ is given such that for each $b\in B$ and $f\in D(B)={\rm
Hom}_R(B,E)$, $(bf)(x)=f(xb)$ and $(fb)(x)=f(bx)$ for all $x\in B$.
The trivial extension $T(B)=B\ltimes D(B)$ is simply referred as the
\emph{trivial extension algebra} of $B$. It is a symmetric algebra,
thus self-injective (\cite[p.128, Proposition 3.9]{ARS}). More
generally, given an automorphism $\sigma: B\longrightarrow B$ of
$R$-algebras, consider the \emph{twisted $B$-bimodule}
$_BB_B^\sigma$ such that the left $B$-module structure is given by
the multiplication as usual and the right $B$-module structure is
given by $xb:=x\sigma(b)$, for all $b\in B$ and $x\in
{_BB_B^\sigma}$. Note that since $\sigma$ is an $R$-algebra
automorphism, $R$ acts on the $B$-bimodule $_BB_B^\sigma$ centrally.
Denote by $D(B^\sigma)=D(_BB_B^\sigma)$ the dual $B$-bimodule and
the corresponding trivial extension $T(B^\sigma)=B\ltimes
D(B^\sigma)$ is called the \emph{twisted trivial extension algebra
of $B$ with respect to $\sigma$}. Note that $T(B^\sigma)$ is
self-injective, in general not symmetric (see Example (4) in
\cite{Far}).

\vskip 5pt

We observe the following result.

\begin{lem}
Use the notation above. We have an isomorphism of categories
$T(B)\mbox{-}{\rm gr} \simeq T(B^\sigma)\mbox{-}{\rm gr}$.
\end{lem}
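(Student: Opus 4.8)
The plan is to build an explicit functor $F:T(B)\text{-gr}\to T(B^\sigma)\text{-gr}$ that is the identity on underlying $R$-modules but ``twists'' the module action degree by degree, and then to check it is an isomorphism of categories by exhibiting an inverse of the same shape. First I would fix notation: a graded $T(B)$-module $M=\bigoplus_{n\in\mathbb Z}M_n$ carries an action of $B$ (degree $0$) and of $D(B)$ (degree $1$), so that $B\cdot M_n\subseteq M_n$ and $D(B)\cdot M_n\subseteq M_{n+1}$, subject to the relation forced by $(b,f)(b',f')=(bb',bf'+fb')$ in $T(B)$, namely that the two actions are compatible as bimodule actions and that $D(B)\cdot D(B)$ acts as zero (this last point is automatic since $D(B)$ sits in degree $1$ and $T(B)$ vanishes in degrees $\geq 2$). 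The key observation is that $\sigma$ induces a graded $R$-algebra isomorphism $\tilde\sigma:T(B)\to T(B^\sigma)$ in the positive direction? No — $T(B)$ and $T(B^\sigma)$ are generally not isomorphic as algebras (that is the whole point of twisting). Instead the idea must be the ``power-of-$\sigma$'' trick: define $F(M)$ to have the same graded pieces $F(M)_n=M_n$, with $B$ acting on $F(M)_n$ through $\sigma^{-n}$ (i.e. $b\cdot_F x:=\sigma^{-n}(b)\cdot x$ for $x\in M_n$) and with $f\in D(B^\sigma)$ acting via the canonical $R$-linear identification of $D(B^\sigma)$ with $D(B)$ as $R$-modules, suitably precomposed with the appropriate power of $\sigma$.

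The main steps, in order, are: (1) write down $F$ on objects precisely, specifying how each homogeneous component of $T(B^\sigma)=B\oplus D(B^\sigma)$ acts on $F(M)$, using the $R$-module equality $D(B^\sigma)=D(B)$ and the twisted bimodule structure $(xb)=x\sigma(b)$ on $B^\sigma$; (2) verify the module axioms for $F(M)$ — this is where the powers $\sigma^{\pm n}$ on successive graded pieces are engineered exactly so that the degree-raising action of $D(B^\sigma)$ intertwines the degree-$n$ action and the degree-$(n+1)$ action correctly, i.e. so that associativity $(f\cdot b)\cdot x=f\cdot(b\cdot x)$ holds for $f\in D(B^\sigma)$, $b\in B$; (3) define $F$ on morphisms as the identity map of underlying graded $R$-modules and check it is $T(B^\sigma)$-linear (immediate once the actions on source and target are matched); (4) construct $G:T(B^\sigma)\text{-gr}\to T(B)\text{-gr}$ by the analogous recipe with $\sigma$ replaced by $\sigma^{-1}$ (equivalently, twisting the $n$-th piece by $\sigma^{n}$), and check $GF=\mathrm{id}$ and $FG=\mathrm{id}$ on the nose, not merely up to natural isomorphism — this is why we get an \emph{isomorphism} of categories rather than just an equivalence.

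I expect the main obstacle to be step (2): getting the bookkeeping of which power of $\sigma$ to insert on the degree-$n$ component so that \emph{both} the $B$-action axiom and the $D(B)$-action (bimodule) axioms are simultaneously satisfied, and in particular checking that the degree-$1$ action of $D(B^\sigma)$ from $M_n$ to $M_{n+1}$ is compatible — here one must unwind the definition of the bimodule $D(B^\sigma)$, recalling that its left and right $B$-actions are $(bf)(x)=f(x\sigma(b))$ versus $(bf)(x)=f(xb)$ for $D(B)$, and confirm that the discrepancy is absorbed exactly by the change from $\sigma^{-n}$ to $\sigma^{-(n+1)}$ on the target. Once the correct normalization is found, everything else is a routine verification, and the symmetry of the construction under $\sigma\leftrightarrow\sigma^{-1}$ makes the inverse functor and the identities $GF=\mathrm{id}=FG$ essentially formal.
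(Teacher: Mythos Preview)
Your approach is exactly the paper's: keep the underlying graded $R$-module fixed, twist the action on the $n$-th homogeneous piece by a suitable power of $\sigma$, and observe that the construction is strictly invertible, giving an isomorphism (not merely an equivalence) of categories. The paper's explicit formula, for homogeneous $m\in M$ of degree $|m|$, is
\[
(b,f)\star m \;=\; \sigma^{|m|}(b)\,m \;+\; (f\circ\sigma^{-|m|})\,m .
\]
One small correction: your tentative choice $b\cdot_F x=\sigma^{-n}(b)\,x$ has the wrong sign and will not survive the associativity check in your step~(2). Writing $b\cdot_F x=\sigma^{\alpha(n)}(b)x$ and $f\cdot_F x=(f\circ\sigma^{\beta(n)})x$, the two compatibilities $b\cdot_F(f\cdot_F x)=(\sigma(b)f)\cdot_F x$ and $f\cdot_F(b'\cdot_F x)=(fb')\cdot_F x$ force $\alpha(n+1)+\beta(n)=1$ and $\alpha(n)+\beta(n)=0$, so $\alpha(n)=n+c$ must \emph{increase} with $n$; no choice of the $D(B)$-twist rescues $\alpha(n)=-n$. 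You correctly anticipated that this normalization is the only real content; once the sign is fixed, your steps~(3)--(4) go through verbatim.
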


\begin{proof}
Note that as $R$-modules $T(B^\sigma)=B\oplus D(B)$, and its
multiplication is given by $(b,f)\star (b',f')=(bb',
\sigma(b)f'+fb')$. Given a graded $T(B)$-module $M=\oplus_{n\in
\mathbb{Z}}M_n$, we endow a $T(B^\sigma)$-action on it as follows:
given a homogeneous element $m\in M$, define
$$(b,f)\star
m=\sigma^{|m|}(b)m+(f\circ \sigma^{-|m|}) m,$$ where $|m|$ denotes
the degree of $m$, and $f\circ \sigma^{-|m|}:
B\stackrel{\sigma^{-|m|}}\longrightarrow
B\stackrel{f}\longrightarrow E \in D(B)$ means the composite. It is
direct to check that ``$\star$" gives $M$ a graded
$T(B^\sigma)$-module structure. Furthermore this gives an
isomorphism (more than an equivalence) of categories
$T(B)\mbox{-}{\rm gr} \simeq T(B^\sigma)\mbox{-}{\rm gr}$.
\end{proof}

\vskip 5pt

 Let $A=\oplus_{n\geq 0}A_n$ be a positively graded artin algebra and
let $c=\mbox{max}\{n\geq 0\; |\; A_n\neq 0\}$. As in the
introduction we always assume that $c\geq 1$. Consider the category
$A\mbox{-gr}$ of finitely generated graded left $A$-modules. For a
graded $A$-module $M=\oplus_{n\in \mathbb{Z}}M_n$, its \emph{width}
$w(M)$ is defined to be ${\rm max}\{n\;| M_n\neq 0\} - {\rm
min}\{n\;| M_n\neq 0\}+1$ (for $M=0$, set $w(M)=0$). For example
$w(A)=c+1$, here we regard $A$ as a graded $A$-module via the
multiplication such that the identity $1_A$ is at the $0$-th
component. For a graded $A$-module $M=\oplus_{n\in \mathbb{Z}}M_n$,
denote by $M(1)$ its \emph{shifted module} which is the same as $M$
as ungraded modules, and which is graded such that $M(1)_n=M_{n+1}$.
This gives rise to the \emph{degree-shift automorphism} $(1):
A\mbox{-gr}\longrightarrow A\mbox{-gr}$. Denote by $(d)$ the $d$-th
power of $(1)$ for each $d\in \mathbb{Z}$ (\cite{NV}). Recall that
each indecomposable projective object in $A\mbox{-gr}$ is of the
form $Ae(d)$, where $e\in A_0$ is a primitive idempotent and $d\in
\mathbb{Z}$; dually each indecomposable injective object is of the
form $D(eA)(d)$, where $D(eA)$ is graded such that
$D(eA)_n=D(eA_{-n})$. For details, see \cite[section 5]{GG}.

\par \vskip 5pt

\begin{lem}
Let $A=\oplus_{n\geq 0}A_n$ be a graded self-injective algebra.
Assume that it is left well-graded. Then it is right well-graded.
\end{lem}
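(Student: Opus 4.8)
The plan is to use the self-injectivity of $A$ to identify, via the Matlis dual, the "left well-graded" condition with a statement about the socle (or the top) of the graded projectives, and then transport it to the "right well-graded" condition by working on the opposite algebra. Concretely, let $e\in A_0$ be a primitive idempotent. Since $A$ is graded self-injective, the indecomposable graded projective $Ae$ is also injective, so $Ae\cong D(fA)(d)$ for some primitive idempotent $f\in A_0$ and some $d\in\mathbb{Z}$; comparing the supports of the gradings (using $w(Ae)\le c+1$ and that $Ae$ is generated in degree $0$) one pins down $d$ and, crucially, identifies the top degree in which $Ae$ is nonzero with a statement about $fA_c$. The first step, then, is to make this bijection $e\mapsto f$ precise and to show: $A$ is left well-graded if and only if for every primitive idempotent $e$ one has $(Ae)_c\neq 0$, i.e.\ the top-degree component of every indecomposable graded projective is nonzero.

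Next I would dualize. The point is that $A$ is left well-graded iff $(Ae)_c\neq 0$ for all primitive $e\in A_0$, which says $A_c e\neq 0$ for all such $e$ — wait, more carefully: $eA_c\neq 0$ for all nonzero idempotents $e$ is equivalent to $eA_c\neq 0$ for all primitive idempotents $e$, which is the statement that the right $A$-module $A_c$ has no summand of its top killed, i.e.\ is "faithful enough" on the right. Dually, right well-graded says $A_c e'\neq 0$ for all primitive $e'\in A_0$. So the real content is the symmetry $eA_c\neq 0\ \forall e \iff A_c e'\neq 0\ \forall e'$. Here I would invoke that $A^{\mathrm{op}}$ is again graded (with the same $c$) and graded self-injective, and that the Nakayama permutation of the self-injective algebra $A_0$ — or rather of $A$ — interchanges the relevant idempotents. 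The cleanest route: show $eA_c\neq 0 \iff eA/\mathrm{rad}$ appears in the socle series appropriately $\iff$ the Nakayama-dual idempotent $\nu(e)$ satisfies $A_c\,\nu(e)\neq 0$, and since $\nu$ is a permutation of the primitive idempotents of $A_0$, ranging over all $e$ is the same as ranging over all $\nu(e)$.

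So the key steps in order are: (1) for primitive $e\in A_0$, use injectivity of $Ae$ to write $Ae\cong D(\nu(e)A)(c)$ for a suitable permutation $\nu$ of primitive idempotents (the shift is by $c$ precisely because $Ae$ lives in degrees $0$ through its top degree, which must be $c$ if $e$ is to contribute — this is where left well-gradedness enters to force the top degree to be exactly $c$); (2) extract from this isomorphism, by looking at the degree-$c$ component, the equivalence $eA_c\neq 0 \iff A_c\,\nu(e)\neq 0$; (3) conclude since $\nu$ is a bijection. I expect step (1) — establishing that left well-gradedness forces every indecomposable graded projective $Ae$ to actually reach degree $c$, and correctly bookkeeping the degree shift in the injective envelope — to be the main obstacle; everything after that is formal manipulation with idempotents and the duality $D$. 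An alternative, perhaps slicker, approach avoids the permutation entirely: apply $D$ to the chain of inclusions defining well-gradedness and use that $D$ takes the graded projective-injectives of $A$ to those of $A^{\mathrm{op}}$, reducing the right-hand statement for $A$ to the left-hand statement for $A^{\mathrm{op}}$, which holds by the same self-injectivity hypothesis once one checks $A^{\mathrm{op}}$ inherits it.
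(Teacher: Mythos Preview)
Your approach is correct in outline, but it is considerably more elaborate than the paper's. The paper avoids the Nakayama permutation and all the degree-shift bookkeeping you flag as ``the main obstacle'' by arguing directly with the width invariant $w(-)$, as follows. One of the two well-graded conditions is equivalent to the statement that every indecomposable projective object in $A\mbox{-gr}$ has width $c+1$. Now for \emph{any} primitive idempotent $e\in A_0$, the module $D(eA)$ is an indecomposable injective in $A\mbox{-gr}$, hence (by graded self-injectivity) projective, hence of width $c+1$; since trivially $w(D(eA))=w(eA)$, one gets $w(eA)=c+1$ for every primitive $e$, which is the other well-graded condition.

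The point of comparison: you start from $Ae$, match it to some $D(\nu(e)A)(d)$, and then have to argue that $\nu$ is a bijection and that $d$ is pinned down. The paper instead starts from $D(eA)$ for an \emph{arbitrary} $e$ and simply uses that it lands among the projectives --- no matching, no permutation, no shift to compute. Your route would work (the bijection $\nu$ is just the Nakayama permutation, and the shift is determined once widths are known), but it proves more than is needed. Your ``alternative, perhaps slicker'' approach at the end --- dualize and reduce to the opposite side --- is in spirit exactly what the paper does, and is the one to pursue.

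One small caution: in your exploratory paragraph you oscillate between $(Ae)_c=A_c e$ and $eA_c$; these are genuinely different, and keeping them straight is what makes the width formulation clean. The condition $eA_c\neq 0$ is $w(eA)=c+1$, while $A_c e\neq 0$ is $w(Ae)=c+1$; the argument swaps one for the other via $D$ and self-injectivity.
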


\begin{proof}
Note that $A$ is left well-graded if and only if $w(Ae)=c+1$ for
each primitive idempotent $e\in A_0$, thus if and only if $w(P)=c+1$
for each indecomposable projective object $P$ in $A\mbox{-gr}$.
Since $A$ is graded self-injective, the indecomposable injective
graded module $D(eA)$ is projective, and thus by above
$w(D(eA))=c+1$. Note that $w(D(eA))=w(eA)$, where $eA$ is considered
as a graded right $A$-module. Hence for each primitive idempotent
$e\in A_0$ we have $w(eA)=c+1$, and this shows that $A$ is right
well-graded.
\end{proof}

We will divide the proof of Theorem 1.1 into several easy results.
Let $A=\oplus_{n\geq 0}A_n$ be a graded artin algebra and let
$\mathrm{b}(A)$ be its Beilinson algebra. Consider the following
$R$-module
  $$\mathrm{x}(A)=\begin{pmatrix} A_c & 0 & \cdots & 0 & 0\\
                     A_{c-1}     &  A_c  & \cdots   & 0 & 0\\
                       \vdots &  \vdots &   \ddots & \vdots &  \vdots \\
                       A_2 & A_3  & \cdots  & A_c & 0\\
                         A_1 & A_2 & \cdots  & A_{c-1} & A_c \end{pmatrix}. $$
Note that there is a natural $\mathrm{b}(A)$-bimodule structure on
$\mathrm{x}(A)$, induced from matrix multiplication and the
multiplication of $A$; moreover, $R$ acts on $\mathrm{x}(A)$
centrally. Consider the trivial extension
$\mathrm{t}(A)=\mathrm{b}(A)\oplus \mathrm{x}(A)$, which is a graded
algebra as above.

\begin{lem}
There is an equivalence of categories $A\mbox{-}{\rm gr}\simeq
\mathrm{t}(A)\mbox{-}{\rm gr}$. Moreover, $A$ is left well-graded if
and only if $\mathrm{t}(A)$ is.
\end{lem}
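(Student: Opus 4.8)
The plan is to realize $\mathrm{t}(A)$ as a ``block version'' of $A$ and then match up graded modules on the two sides. Concretely, I would first identify $\mathrm{b}(A)$ with a corner of a matrix algebra built from $A$: set $e_i \in A_0$ to be a complete list of orthogonal idempotents summing to $1_A$ (thinking of the $c$ diagonal copies of $A_0$), and observe that $\mathrm{b}(A) \cong f \mathbf{M}_c(A) f$ is not quite the right picture since $A$ carries the grading. Instead, the cleaner route: a graded $\mathrm{t}(A)$-module $N$ has an underlying $\mathrm{b}(A)$-module structure, and $\mathrm{b}(A) = \bigoplus_{1\le i \le j \le c} A_{j-i}$ as an $R$-module with the $(i,j)$-entry being $A_{j-i}$; likewise $\mathrm{x}(A)$ has $(i,j)$-entry $A_{c+j-i}$ (nonzero only when $j \le i$). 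So $\mathrm{b}(A) \oplus \mathrm{x}(A)$ has $(i,j)$-entry equal to $A_{j-i} \oplus A_{c+j-i}$, which is exactly ``$A$ in degrees $\equiv j-i \pmod{c+1}$'' — up to the grading bookkeeping, $\mathrm{t}(A)$ is the $(c+1)\times(c+1)$ ``cyclic'' matrix-like algebra whose $(i,j)$ block collects the $A$-degrees congruent to $j-i$.

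Making that precise is the heart of the argument. I would define a functor $F: A\mbox{-gr} \To \mathrm{t}(A)\mbox{-gr}$ as follows. Given $M = \bigoplus_{n\in\mathbb Z} M_n \in A\mbox{-gr}$, ``fold'' its grading: for each residue $r \in \{0,1,\dots,c\}$ let the new degree-$r$ part collect $\bigoplus_{n \equiv r \,(c+1)} M_n$, but re-indexed into the $c$ rows so that the row index records $\lfloor \cdot \rfloor$ of the original degree. The multiplication of $\mathrm{t}(A)$, read through the matrix picture, exactly reproduces the $A$-action: multiplying by an element of $A_k$ sitting in block $(i,j)$ with $j - i \equiv k$ raises the original degree by $k$ and either keeps the ``row'' or wraps it around with a shift into the $\mathrm{x}(A)$ summand — and the $\mathrm{x}(A)$ summand being placed in degree $1$ of $\mathrm{t}(A)$ is precisely what makes the wrap-around consistent. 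The inverse functor ``unfolds'' a graded $\mathrm{t}(A)$-module $N = N_0 \oplus N_1 \oplus \cdots$ back to an honest $\mathbb Z$-graded $A$-module by reading off the blocks and the $\mathrm{t}(A)$-grading together. I would check $F$ and its inverse are mutually quasi-inverse exact functors; exactness and essential surjectivity are formal once the object-level bijection and its naturality are set up.

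For the ``moreover'' clause, recall from Lemma 2.4's proof (the criterion invoked in Lemma~2.2) that $A$ is left well-graded iff $w(Ae) = c+1$ for every primitive idempotent $e \in A_0$, i.e.\ iff every indecomposable graded projective $A$-module has full width $c+1$. Under the equivalence $F$, indecomposable graded projectives of $A$ correspond to indecomposable graded projectives of $\mathrm{t}(A)$ (an equivalence of module categories preserves projectivity and indecomposability), and the width condition translates: $Ae$ having width $c+1$ means each of the $c$ ``rows'' is hit, which is exactly the condition that the corresponding indecomposable projective $\mathrm{t}(A)e_i$ (for $e_i$ a primitive idempotent of $\mathrm{b}(A)$ sitting in the $i$-th diagonal block) has its $\mathrm{x}(A)$-part nonzero after multiplying by the appropriate idempotent — i.e.\ that $e_i \cdot \mathrm{x}(A) \neq 0$ for each primitive $e_i$, which unravels to: for each primitive idempotent $e\in A_0$, $eA_c \neq 0$, precisely the definition of left well-graded applied to $\mathrm{t}(A)$ in its own grading (its top degree is $1$, and the degree-$1$ part is $\mathrm{x}(A)$). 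So the two well-gradedness conditions match.

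The main obstacle I anticipate is purely bookkeeping: getting the re-indexing between the single $\mathbb Z$-grading on $A$-modules and the pair (block position, $\{0,1\}$-grading) on $\mathrm{t}(A)$-modules exactly right, and verifying that the matrix multiplication of $\mathrm{t}(A)$ — which mixes the $\mathrm{b}(A)$ and $\mathrm{x}(A)$ parts via the $A$-multiplication $A_p \times A_q \to A_{p+q}$ with possible ``carry'' past degree $c$ — is compatible with the $A$-action under this identification. Once the dictionary ``original degree $n \leftrightarrow$ (row $= $ quotient, residue $=$ $\mathrm{t}(A)$-degree) $\pmod{c+1}$'' is fixed and one checks it on generators of $\mathrm{t}(A)$, everything else is routine; I do not expect to need well-gradedness or self-injectivity for the equivalence itself, only for the ``moreover'' part (and there only the well-graded translation, not self-injectivity).
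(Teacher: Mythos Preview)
Your overall strategy --- build the equivalence by an explicit ``folding'' functor that packages consecutive graded pieces of an $A$-module into a column vector on which the matrix algebra $\mathrm{t}(A)$ acts --- is exactly what the paper does. But your dictionary has a concrete error that is more than bookkeeping. The Beilinson algebra $\mathrm{b}(A)$ (and hence $\mathrm{x}(A)$ and $\mathrm{t}(A)$) is a $c\times c$ matrix algebra, not $(c{+}1)\times(c{+}1)$, so the fold must be modulo $c$, not $c{+}1$: indeed the $(i,j)$ block of $\mathrm{t}(A)$ collects $A_{j-i}$ and $A_{c+j-i}$, and $c+j-i\equiv j-i\pmod{c}$ (not $\pmod{c{+}1}$). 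Moreover you have the roles reversed: the residue of the original degree (mod $c$) becomes the \emph{row index}, and the \emph{quotient} becomes the $\mathrm{t}(A)$-degree. The paper's functor is $\Phi(M)_n=\bigoplus_{i=nc}^{(n+1)c-1}M_i$, read as a size-$c$ column, with inverse $\Psi(N)_{ic+r}=e_{rr}N_i$. Your version cannot be salvaged as written: you put the residue in the $\mathrm{t}(A)$-degree slot, which would force the graded pieces of $\Phi(M)$ to be indexed by a finite set $\{0,\dots,c\}$ rather than $\mathbb{Z}$, and you put the quotient in the row slot, which ranges over $\mathbb{Z}$ while there are only $c$ rows. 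Once you swap these and replace $c{+}1$ by $c$, your argument becomes the paper's.

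For the ``moreover'' clause, your plan to transport the width condition through the equivalence is valid in principle, but the paper's direct computation is shorter: the primitive idempotents of $\mathrm{t}(A)_0=\mathrm{b}(A)$ are the $e_{rr}e_i$ with $e_i$ primitive in $A_0$, and one checks that $e_{rr}e_i\,\mathrm{x}(A)\neq 0$ for every $r$ if and only if $e_iA_c\neq 0$, which is the left well-graded condition for $A$.
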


\begin{proof}
Define a functor $\Phi:A\mbox{-}{\rm gr}\longrightarrow
\mathrm{t}(A)\mbox{-}{\rm gr} $ as follows: for $M=\oplus_{n\in
\mathbb{Z}}M_n\in A\mbox{-gr}$, set $\Phi(M)=\oplus_{n\in
\mathbb{Z}}\Phi(M)_n$ with
$\Phi(M)_n=\bigoplus_{i=nc}^{(n+1)c-1}M_i$, and there is a natural
graded $\mathrm{t}(A)$-module structure on $\Phi(M)$ (using the
multiplication rule of matrices on column vectors; here the elements
in $\Phi(M)_n$ are viewed as column vectors of size $c$, and note
that $c\geq 1$); the action of $\Phi$ on morphisms is the identity.
To construct the inverse,  for each $0\leq r\leq c-1$, set
$e_{rr}\in \mathrm{b}(A)$ to be the elementary matrix having the
$(r+1, r+1)$ entry $1$ and elsewhere $0$. Define a functor $\Psi:
\mathrm{t}(A)\mbox{-}{\rm gr} \longrightarrow A\mbox{-}{\rm gr}$
sending a graded $\mathrm{t}(A)$-module $N=\oplus_{n\in
\mathbb{Z}}N_n$ to $\Psi(N)=\oplus_{n\in \mathbb{Z}}\Psi(N)_n$ such
that $\Psi(N)_{ic+r}=e_{rr}N_{i}$ for $i\in \mathbb{Z}$ and $0\leq
r\leq c-1$; on $\Psi(N)$ there is a natural graded $A$-module
structure. Then it is direct to check that $\Phi$ and $\Psi$ are
mutually inverse to each other. Note that one may have a more
conceptual proof of the equivalence above by \cite[Theorem
2.12]{Sie} (compare \cite[Example 3.10]{Z} and \cite{Bo}). \vskip
3pt

 For the second statement, take $1_{A_0}=\sum_{i=1}^l e_i$ to be  a
 decomposition of unity into primitive idempotents, and thus every primitive idempotent
 of $A_0$ is conjugate to one of $e_i$'s. Hence $A$ is left well-graded
 if and only if $e_iA_c\neq 0$ for each $1\leq i\leq l$. However $1_{\mathrm{b}(A)}=\sum_{r=0}^{c-1}\sum_{i=1}^l
 e_{rr}e_i$ is a decomposition of unity in $\mathrm{b}(A)$ into primitive idempotents,
 and hence $\mathrm{t}(A)$ is left well-graded if and only if $e_{rr}e_i\mathrm{x}(A)\neq 0$
 for each $0\leq r\leq c-1$ and $1\leq i\leq l$.
 Note that $e_{rr}e_i\mathrm{x}(A)=e_iA_c$ and then we are done.
\end{proof}

Consider the trivial extension  $T=B\ltimes  X$ of an artin
$R$-algebra $B$ by a (nonzero) $B$-bimodule $X$ as above. Take $e\in
B$ to be an idempotent such that $eBe$ is the \emph{basic algebra}
associated to $B$ (\cite[p.35]{ARS}). Thus $eXe$ has the induced
$eBe$-bimodule structure and we have an identification of (graded)
algebras $eTe=eBe\ltimes eXe$. The following result is immediate
from the Morita equivalence between the algebras $B$ and $eBe$.

\begin{lem}
Use the notation above. We have an equivalence of categories
$T\mbox{-}{\rm gr} \simeq eTe\mbox{-}{\rm gr}$. Moreover $T$ is left
well-graded if and only if so is $eTe$.
\end{lem}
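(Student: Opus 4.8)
The plan is to deduce everything from the classical Morita equivalence between $B$ and $eBe$, being careful that the equivalence respects the grading and the well-gradedness. First I would recall that the idempotent $e\in B$ satisfies $Be B=B$, so the functor $eT(-):T\mbox{-}{\rm gr}\longrightarrow eTe\mbox{-}{\rm gr}$ (multiplication by $e$, sending a graded $T$-module $N$ to $eN$) is an equivalence. To see this I would check two things: (i) the ring isomorphism $eTe\cong eBe\ltimes eXe$ of graded algebras, which is immediate since $e\in B$ lies in degree $0$, so $eTe=eBe\oplus eXe$ with the induced bimodule structure on $eXe$ and the induced grading; (ii) that $e$ is a \emph{full} idempotent in $T$, i.e. $TeT=T$. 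For (ii), note $TeT\supseteq BeB=B$, and then $TeT\supseteq B\cdot X=X$ as well (since $X=B\cdot X$, using $1_B\in BeB$), so $TeT=T$. Hence multiplication by $e$ induces the desired equivalence of graded module categories; its quasi-inverse is $Te\otimes_{eTe}(-)$, which also respects the grading because $Te$ is a graded $T$-$eTe$-bimodule concentrated in degrees $0$ and $1$.

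Next I would verify the statement about left well-gradedness. Writing $c$ for the top degree (here $T$ has $c=1$, but the argument is the same), the algebra $T$ is left well-graded iff $fX\neq 0$ for every primitive idempotent $f\in B=T_0$. Since every primitive idempotent of $B$ is conjugate in $B$ to a primitive idempotent lying under $e$ (because $eBe$ is a basic algebra Morita equivalent to $B$, every simple $B$-module appears in the top of some $Bg$ with $g\leq e$ primitive), and since conjugation does not change whether $fX$ vanishes, we reduce to primitive idempotents $g$ with $g=ege\leq e$. For such $g$ one has $gX=gXe\cdot(\text{something})$; more precisely $gX\neq 0$ iff $gXe\neq 0$, because $Xe B=X$ (as $BeB=B$) forces $gX=gXeB$, and conversely $gXe\subseteq gX$. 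Thus $T$ is left well-graded iff $gXe\neq 0$ for all primitive $g\leq e$, which is exactly the condition that $eTe=eBe\ltimes eXe$ is left well-graded (its degree-$0$ part is $eBe$, whose primitive idempotents are precisely these $g$). This gives the ``moreover'' clause.

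The main obstacle I anticipate is bookkeeping around the grading: one must be sure that the standard Morita equivalence functors $eT(-)$ and $Te\otimes_{eTe}(-)$ genuinely land in and come from \emph{graded} modules and are mutually quasi-inverse as graded functors, rather than merely after forgetting the grading. This is routine but needs the observation that $e$ sits in degree $0$, so both $eT$, viewed as a graded $eTe$-$T$-bimodule, and $Te$, as a graded $T$-$eTe$-bimodule, have their gradings inherited from $T$, and the counit/unit isomorphisms $eT\otimes_T Te\cong eTe$ and $Te\otimes_{eTe}eT\cong T$ (the latter using $TeT=T$) are homogeneous. Once that is in place, the equivalence $T\mbox{-}{\rm gr}\simeq eTe\mbox{-}{\rm gr}$ and the reduction of the well-gradedness condition both follow. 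I would present the grading verification briefly and refer to \cite{NV} or \cite{GG} for the general principle that a full idempotent concentrated in degree $0$ yields a graded Morita equivalence.
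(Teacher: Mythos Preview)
Your proposal is correct and follows exactly the approach the paper intends: the paper's own ``proof'' is the single sentence that the result is immediate from the Morita equivalence between $B$ and $eBe$, and what you have written is a careful unpacking of precisely that claim (full idempotent $e$ in degree~$0$, graded bimodules $Te$ and $eT$, and the reduction of the well-gradedness condition via $BeB=B$). There is nothing to add or correct.
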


The key observation is as follows.

\begin{lem}
Let $T=B\ltimes X$ be a trivial extension as above. Assume that $B$
is a basic algebra and $T$ is well-graded self-injective. Then there
is an isomorphism of $B$-bimodules $X\simeq D(B^\sigma)$ for some
$R$-automorphism $\sigma$ on $B$. In particular, there is an
isomorphism
 $T\simeq T(B^\sigma)$ of graded algebras.
\end{lem}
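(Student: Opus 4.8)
The plan is to read the bimodule $X$ off from the graded self-injectivity of $T$, one degree at a time. First I would record the setup: since $X\neq 0$ the algebra $T=B\oplus X$ is concentrated in degrees $0$ and $1$, so its top degree is $c=1$; since $B$ is basic and $\mathrm{rad}(T)=\mathrm{rad}(B)\oplus X$ with $X$ nilpotent, $T$ is basic too, and I may fix a complete set $e_1,\dots,e_l$ of primitive orthogonal idempotents of $B$ (the $e_i$ remain primitive in $T$ because $e_iTe_i/e_iXe_i\cong e_iBe_i$ is local and $e_iXe_i$ is nilpotent). Thus ${}_TT=\bigoplus_i Te_i$ with $Te_i=Be_i\oplus Xe_i$ indecomposable, and $T_T=\bigoplus_i e_iT$. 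Well-gradedness gives $e_iX\neq 0$ and $Xe_i\neq 0$ for all $i$, so $Te_i$ is nonzero exactly in degrees $0$ and $1$, whereas the indecomposable graded injective $D(e_jT)$ (with $D(e_jT)_n=D(e_jT_{-n})$) is nonzero exactly in degrees $0$ and $-1$, there equal to $D(e_jB)$ and $D(e_jX)$.

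Next I would produce a permutation $\nu$ of $\{1,\dots,l\}$ with $Te_i\cong D(e_{\nu(i)}T)(-1)$ in $T\mbox{-gr}$ for each $i$: since $T$ is self-injective, $Te_i$ is injective in $T\mbox{-gr}$, hence $\cong D(e_jT)(d)$ for some primitive $e_j\in B$ (which, as $T$ is basic, may be taken among $e_1,\dots,e_l$) and some $d\in\mathbb Z$; comparing the two-element supports forces $d=-1$, and $T$ basic makes $j=:\nu(i)$ unique and $\nu$ injective, hence bijective. Restricting $Te_i\cong D(e_{\nu(i)}T)(-1)$ to its degree-$0$ and degree-$1$ parts (these are isomorphisms of left $T_0=B$-modules) gives $Be_i\cong D(e_{\nu(i)}X)$ and $Xe_i\cong D(e_{\nu(i)}B)$ as left $B$-modules, $D$ applied to the indicated right $B$-modules. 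Summing over $i$, reindexing by $\nu$, and using $B=\bigoplus_i e_iB$, $X=\bigoplus_i e_iX$ as right $B$-modules, I get ${}_BB\cong D(X_B)$ and ${}_BX\cong D(B_B)$ as left $B$-modules; dualizing the first also yields $X_B\cong D({}_BB)$ as right $B$-modules.

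Then I would glue these one-sided isomorphisms into a bimodule isomorphism. Fix a right $B$-module isomorphism $\theta\colon X\to D({}_BB)$ and transport the left $B$-action of $X$ along $\theta$; this equips $D({}_BB)$ with a left action $\rho$ commuting with its standard right action, i.e.\ $\rho$ is a ring homomorphism $B\to \mathrm{End}_{\mbox{mod-}B}(D({}_BB))$. The duality $D$ identifies $\mathrm{End}_{\mbox{mod-}B}(D({}_BB))$ with $\mathrm{End}_{B\mbox{-mod}}({}_BB)^{\mathrm{op}}$, hence with $B$ (via right multiplications), and under this identification the standard left action of $D({}_BB)$ corresponds to $\mathrm{id}_B$; so $\rho$ corresponds to a ring endomorphism $\sigma$ of $B$. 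As ${}_BX\cong{}_BD(B_B)$ is faithful, $\rho$ and hence $\sigma$ is injective, hence bijective since $B$ has finite length over $R$; and $\sigma$ fixes the image of $R$ because $R$ acts centrally on $X$, so $\sigma$ is an $R$-algebra automorphism of $B$. Unwinding the identification, $\rho$ is the standard left action precomposed with $\sigma$, so $\theta$ is an isomorphism of $B$-bimodules from $X$ onto $D({}_BB)$ with standard right action and left action $b\cdot f=\bigl(x\mapsto f(x\sigma(b))\bigr)$, which is exactly $D(B^\sigma)$. Consequently $T=B\ltimes X\cong B\ltimes D(B^\sigma)=T(B^\sigma)$ as graded algebras, the isomorphism being the identity on $B$ and $\theta$ on $X$.

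The hard part will be the last paragraph: lining up the left/right and opposite-ring conventions so one lands precisely on the bimodule $D(B^\sigma)$ as defined, and checking that $\sigma$ is genuinely a ring (not merely additive) automorphism fixing $R$. Everything before that is formal once the degreewise isomorphism $Te_i\cong D(e_{\nu(i)}T)(-1)$ is available, and that isomorphism itself rests on well-gradedness (to pin the shift down to $-1$) together with $B$ being basic (for the uniqueness making $\nu$ a permutation).
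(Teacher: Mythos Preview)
Your proof is correct and follows essentially the same route as the paper's: establish the graded isomorphisms $Te_i\cong D(e_{\nu(i)}T)(-1)$ from well-graded self-injectivity, read off the one-sided isomorphisms ${}_BX\cong D(B_B)$ and $X_B\cong D({}_BB)$, and then upgrade these to a bimodule isomorphism $X\cong D(B^\sigma)$. The only difference is cosmetic: the paper dualizes first to $M=D(X)$, obtains ${}_BM\cong{}_BB$ and $M_B\cong B_B$, and then declares it ``a good exercise'' to deduce $M\cong{}_BB_B^\sigma$; you work on the $X$ side and actually carry out that exercise, with a careful endomorphism-ring argument pinning down $\sigma$ and verifying it is an $R$-algebra automorphism.
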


\begin{proof}
Take $1_B=\sum_{i=1}^le_i$ to be a decomposition of unity into
primitive idempotents. Since $B$ is basic, the set $\{Te_i(d)\; |\;
1\leq i\leq l, d\in \mathbb{Z}\}$ forms a complete set of pairwise
non-isomorphic projective objects in $T\mbox{-gr}$. Dually,
$\{D(e_iT)(d)\; |\; 1\leq i\leq l, d\in \mathbb{Z}\}$ forms a
complete set of pairwise non-isomorphic injective objects in
$T\mbox{-gr}$. Since $T$ is well-graded, all these modules have
width $2$. Since $T$ is graded self-injective, we have an
isomorphism of graded $T$-modules $Te_i\simeq D(e_{s(i)}T)(-1)$,
where $s:\{1, \cdots ,l\}\longrightarrow \{1, \cdots ,l\}$ forms a
permutation. In particular, we have isomorphisms $Xe_i\simeq
D(e_{s(i)}B)$ of left $B$-modules for each $1\leq i\leq l$. Since
$B$ is basic,  we deduce an isomorphism of left $B$-modules
$_BX\simeq D(B_B)$. Similarly we have an isomorphism $X_B\simeq
D(_BB)$ of right $B$-modules.

Consider the dual $B$-bimodule $M=D(_BX_B)$. We have isomorphisms
$_BM \simeq {_BB}$ and $M_B\simeq B_B$. It is a good exercise to
deduce from these isomorphisms that there is an isomorphism of
$B$-bimodules $M\simeq {_BB_B^\sigma}$ for some $R$-automorphism
$\sigma$ on $B$, and thus $X \simeq D(M)\simeq D(B^\sigma)$. We are
done.
\end{proof}

\begin{rem}
The same argument as in the proof above yields the following result
immediately, which we will not use, and which seems of independent
interest. Let $A=\oplus_{n\geq 0} A_n$ be a graded artin algebra
with $A_0$ basic. Set $c={\rm max}\{n\geq 0\; |\; A_n\neq 0\}$.
 Then the following statements are equivalent: \\
(1).\quad  $A$ is \emph{graded Frobenius}, that is, $_AA\simeq
D(A_A)(-c)$ as graded  left $A$-modules. \\
(2). \quad $A$ is graded
self-injective and $A_c$ is a faithful left $A_0$-module.\\
(3). \quad $A$ is well-graded self-injective.
\end{rem}

\vskip 5pt

 \noindent {\bf Proof of Theorem 1.1.}\quad By Lemma 2.3 we have an
 equivalence $A\mbox{-gr}\simeq \mathrm{t}(A)\mbox{-gr}$, and thus the algebra
 $\mathrm{t}(A)$ is well-graded self-injective. Set
 $B=e\mathrm{b}(A)e$ to be the basic algebra associated to the
 Beilinson algebra $\mathrm{b}(A)$. Thus by Lemma 2.4 $\mathrm{t}(A)\mbox{-gr}
 \simeq (B\ltimes X)\mbox{-gr}$  for some (nonzero) $B$-bimodule $X$,
 moreover, the trivial extension $T=B\ltimes X$ is well-graded
 self-injective. By Lemma 2.5, we have an isomorphism of graded
 algebras $T\simeq T(B^\sigma)$, and thus combining it with Lemma
 2.1 we deduce that $T\mbox{-gr}\simeq T(B)\mbox{-gr}$. Now applying
 Lemma 2.4 again we have $T(B)\mbox{-gr}\simeq
 T(\mathrm{b}(A))\mbox{-gr}$ (note that $B=e\mathrm{b}(A)e$ and we have  a natural $B$-bimodule isomorphism
 $D(B)\simeq eD(\mathrm{b}(A))e$), and thus we get the desired equivalence
 $A\mbox{-gr}\simeq T(\mathrm{b}(A))\mbox{-gr}$.  \hfill $\square$

\vskip 10pt

 \noindent {\bf Acknowledgement}\quad The author  would
like to thank Dr. Jiwei He very much for pointing out Remark 2.6 and
the reference \cite{Far} to him.

\bibliography{}

\begin{thebibliography}{999}


\bibitem{ARS}{\sc M. Auslander, I. Reiten and S.O. Smal{\o},}
Representation Theory of Artin Algebras, Cambridge University Press,
1995.

\bibitem{Ba} {\sc D. Baer,} {\em Tilting sheaves in representation theory of algebras,}
Manuscripta Math. {\bf 60}(3) (1988), 323--347.


\bibitem{Bei}{\sc A.A. Beilinson,} {\em Coherent sheaves on $\mathbb{P}^n$
and problems  of linear algebra,} Func. Anal. Appl. {\bf 12} (1978),
214--216.

\bibitem{Bei2} {\sc A.A. Beilison,} {\em
The derived category of coherent sheaves on $\mathbb{P}^n$,}
Selected translations. Selecta Math. Soviet. {\bf 3}(3) (1983/84),
233--237.

\bibitem{BGG}{\sc I.N. Bernstein, I.M. Gelfand and S.I. Gelfand,} {\em Algebraic
bundles over $\mathbb{P}^n$ and problems of linear algebra,} Func.
Anal. Appl. {\bf 12} (1978), 212--214.

\bibitem{Bo} {\sc P. Boisen,} {\em Graded Morita theory,} J. Algebra
{\bf 161} (1994), 1--25.

\bibitem{Far} {\sc R. Farnsteiner,} {\em Self-injective algebras: examples and Morita
equivalence}, Lecture Notes, availabe at
http://www.mathematik.uni-bielefeld.de/$^\sim$sek/selected.html




\bibitem{GG} {\sc R. Gordon and E.L. Green,}
{\em Graded Artin algebras,} J. Algebra {\bf 76}(1) (1982),
111--137.

\bibitem{H1}{\sc D. Happel,} Triangulated Categories in the
Representation Theory of Finite Dimensional Algebras, London
Mathematical Society Lecture Note Ser. {\bf 119}, Cambridge
University Press, 1988.

\bibitem{H2}{\sc D. Happel,} {\em Auslander-Reiten triangles in derived
categories of finite-dimensional algebras,} Proc. Amer. Math. Soc.
{\bf 112} (1991), 641--648.

\bibitem{KK} {\sc H. Krause and D. Kussin}, {\em Rouquier's theorem on representation dimension.}
 Trends in representation theory of algebras and related topics, 95--103,
  Contemp. Math., {\bf 406}, Amer. Math. Soc., Providence, RI, 2006.



\bibitem{NV}{\sc C. Nastasescu and F. Van Oystaeyen,} Methods of Graded
Rings, Lecture Note in Math. {\bf 1836}, Springer, 2004.




\bibitem{O2}{\sc D. Orlov}, {\em Derived categories of coherent sheaves and triangulated categories
of singularities,} math.AG/0503632v2.

\bibitem{Sie} {\sc S.J. Sierra,} {\em Rings graded equivalent to the Weyl
algebra}, J. Algebra {\bf 321} (2009), 495--531.


\bibitem{SZ} {\sc P. Smith and J.J. Zhang,} {\em Self-injective connected
algebras,} Comm. Algebra {\bf 25}(7) (1997), 2243--2248.


\bibitem{Z} {\sc J.J. Zhang,} {\em Twisted graded algebras and equivalences of graded
categories}, Proc. London Math. Soc. {\bf 72}(3) (1996), 281--311.

\end{thebibliography}

\vskip 10pt

 {\footnotesize \noindent Xiao-Wu Chen, Department of
Mathematics, University of Science and Technology of
China, Hefei 230026, P. R. China \\
Homepage: http://mail.ustc.edu.cn/$^\sim$xwchen \\
\emph{Current
address}: Institut fuer Mathematik, Universitaet Paderborn, 33095,
Paderborn, Deutschland}

\end{document}